\newtheorem{thm}{Theorem}[section]
\newtheorem{lem}[thm]{Lemma}
\numberwithin{equation}{section}
\newcommand{\ra}{{\longrightarrow}}
\newcommand{\nra}{{\longarrownot\longrightarrow}}
\newcommand{\gen}{\text{gen}}
\newcommand{\cls}{\text{cls}}
\newcommand{\z}{{\mathbb Z}}
\newcommand{\q}{{\mathbb Q}}
\newcommand{\ds}{\displaystyle}
\begin{document}

\title{Almost 2-universal diagonal quinary quadratic forms}


\author{Myeong Jae Kim}
\address{Department of Mathematical Sciences, Seoul National University, Seoul 151-747, Korea}
\email{device89@snu.ac.kr}





\begin{abstract}

A (positive definite integral) quadratic form is called {\it almost 2-universal} if it represents all (positive definite integral) binary quadratic forms except those in only finitely many equivalence classes. Oh \cite{oh} determined all almost 2-universal quinary diagonal quadratic forms remaining three as candidates. In this article, we prove that those three candidates are indeed almost 2-universal.

\end{abstract}

\maketitle

\section{Introduction}

M.-H. Kim and his collaborators proved in \cite{kko} that there are exactly 11 quinary 2-universal quadratic forms. Hwang \cite{hw} proved that there are exactly 3 quinary diagonal quadratic forms that represents all binary quadratic forms except only one. Oh \cite{oh} proved that there exist only finitely many quinary quadratic forms that represent all but at most finitely many equivalence classes of binary quadratic forms. Such quadratic forms are called {\it almost 2-universal} quadratic forms. And he provided a list of almost 2-universal quinary {\it diagonal} quadratic forms, including 3 unconfirmed candidates. In this article, we show that those 3 candidates are indeed almost 2-universal.

\vskip 1cm
\section{Preliminaries and tools}

We adopt lattice theoretic language. Let $\q$ be the rational number field. For a prime (including $\infty$), let $\q_p$ be the fields of $p$-adic completions of $\q$, in particular $\q_\infty = \mathbb{R}$, field of real numbers. For a finite prime $p$, $\z_p$ denotes the $p$-adic integer ring. Let $R$ be the ring of integers $\z$ or the ring of $p$-adic integers $\z_p$. An $R$-{\it lattice} $L$ is a free $R$-module of finite rank equipped with a non-degenerate symmetric bilinear form $B : L \times L \rightarrow R$. The corresponding quadratic map is denoted by $Q$. For a $R$-lattice $L = R \mathbf{e}_1 + R \mathbf{e}_2 + \cdots + R \mathbf{e}_n$ with basis $\mathbf{e}_1, \mathbf{e}_2, \cdots, \mathbf{e}_n$, we write
$$
L = \big( B(\mathbf{e}_i, \mathbf{e}_j ) \big).
$$
For $R$-sublattices $L_1, L_2$ of $L$, we write $L = L_1 \perp L_2$ when $L = L_1 \oplus L_2$ and $B(\mathbf{v}_1, \mathbf{v}_2) = 0$ for all $\mathbf{v}_1 \in L_1, \, \mathbf{v}_2 \in L_2$. If $L$ admits an orthogonal basis $\{ \mathbf{e}_1, \mathbf{e}_2, \cdots, \mathbf{e}_n \}$, we call $L$ {\it diagonal} and simply write
$$
L = \big< Q(\mathbf{e}_1), Q(\mathbf{e}_2), \cdots, Q(\mathbf{e}_n) \big>.
$$
We call $L$ {\it non-diagonal} otherwise. Define the {\it discriminant} $dL$ of $L$ to be the determinant of the matrix $\big( B( \mathbf{e}_i, \mathbf{e}_j ) \big)$. Note that $dL$ is independent of the choice of a basis up to unit squares of $R$. We define {\it scale} $\mathfrak{s}L$ of $L$ to be the ideal of $R$ generated by $B(\mathbf{v}, \mathbf{w})$ for all $\mathbf{v,w} \in L$, norm $\mathfrak{n}L$ of $L$ to be the ideal of $R$ generated by $Q(\mathbf{v})$ for all $\mathbf{v} \in L$. For $a \in R^\times$, we denote by $L^a$ the $R$-lattice obtained from scaling $L$ by $a$.

Let $\ell, L$ be $R$-lattices. We say $L$ {\it represents} $\ell$ if there is an injective linear map from $\ell$ into $L$ that preserves the bilinear form, and write $\ell \ra L$. Such a map will be called a {\it representation}. A representation is called {\it isometry} if it is surjective. We say two $R$-lattices 
$L,K$ are {\it isometric} if there is an isometry between them, and wrtie $L \cong K$.

For a $\z$-lattice $L$ and a prime $p$, we define the $\z_p$-lattice $L_p := \z_p \otimes L$ and call it  the {\it localization} of $L$ at $p$. The set of all $\z$-lattices that are isometric to $L$ is called the {\it class} of $L$, denoted by $\cls(L)$. The set of all $\z$-lattices $K$ such that $L_p \cong K_p$ for all prime spots $p$ (including $\infty$) is called {\it genus} of $L$, denoted by $\gen(L)$. The number of non-isometric classes in $\gen(L)$ is called the {\it class number} of $L$, denoted by $h(L)$. The following two properties are well-known. (see \cite[103, 102:5]{om})
\begin{itemize}
\item [(1)] $h(L)$ is finite for any $\z$-lattice $L$.
\item[(2)] For two lattices $\ell, L$, if $\ell_p \ra L_p$ for all prime $p$ (including $\infty$), then $\ell \ra K$ for some lattice $K \in \gen(L)$.
\end{itemize}

For a $\z$-lattice $L$, we say that $L$ is {\it positive definite} or simply {\it positive} if $Q(\mathbf{v}) > 0$ for any $\mathbf{v} \in L, \, \mathbf{v \neq 0}$. Let $L$ be a positive $\z$-lattice. $L$ is called {\it n-universal} if $L$ represents all $n$-ary positive $\z$-lattices. And $L$ is called {\it almost n-universal} if $L$ represents all $n$-ary positive $\z$-lattices except those in only finitely many equivalence classes.
For a fixed prime $p$, $L$ is called {\it n-universal over} $\z_p$ if its localization $L_p$ represents all $n$-ary $\z_p$-lattices. And $L$ is called {\it locally n-universal} if it is $n$-universal over $\z_p$ for all primes $p$.

We will denote for convenience
$$ [a,b,c] := \begin{pmatrix} a & b \\ b & c \end{pmatrix}. $$
Any unexplained notations and terminologies can be found in \cite{ki} or \cite{om}.

Now we provide a technique for representations of binary $\z$-lattices by certain quinary $\z$-lattices, which is based on the proof of the main theorems in \cite{hw} and \cite{kko}. Let $\ell = [a,b,c]$ be a binary $\z$-lattice. For any integers $n,s,t$, we define
$$ \ell_{s,t}^n := \begin{pmatrix} a-ns^2 & b-nst \\ b-nst & c-nt^2 \end{pmatrix}. $$
Let $M$ be a $\z$-lattice. It can be verified that $\ell \ra M \perp \langle n \rangle$ if and only if there exist integers $s,t$ such that $\ell_{s,t}^n \ra M$. If the class number of the lattice $M$ is one, we can classify all binary lattices which are represented by $M$ using the local representation theory. In this thesis, we only consider the case that $M$ is quaternary. Let $p$ be a prime such that $p \nmid 2 dM$. Then $M_p \simeq \langle 1,1,1,dM \rangle$. If $dM$ is a square in $\z_p$, then $M$ is 2-universal over $\z_p$. If $dM$ is not a square, then $\ell_p \ra M_p$ if and only if $\ell_p$ is not isometric to any sublattices of the $\z_p$-lattice $\langle p, -p \varDelta \rangle$, where $\varDelta$ is a non-square unit in $\z_p$. In particular,  $\frak{s} \ell \not\subseteq p \z$ implies that $\ell_p \ra M_p$.

Let $\mathfrak{P}$ be the set of primes $p$ such that $\ds \bigg( \frac{dM}{p} \bigg) = -1$. We will choose $s,t$ such that $\gcd(a-ns^2, b-nst)$ has no prime factors in $\mathfrak{P}$. Then the scale of $\ell_{s,t} = [a-ns^2,b-nst,c-nt^2]$ is not contained in $p \z$ for any prime $p \in \mathfrak{P}$. Thus we may only consider the $\z_p$-structure for primes $p \mid 2 dM$. 

Consider the case that $n$ has a prime factor $q \in \mathfrak{P}$, which is a hard case. If $\mathfrak{s} \ell \subseteq q \z$, then $\mathfrak{s} \ell_{s,t} \subseteq q \z$ for all $s,t$. For this reason, we have to prove this case separately.

Suppose that $\ell_{s,t}^n \ra M$ over $\z_p$ for all primes $p$. If $\ell_{s,t}$ is positive, then we conclude that $\ell_{s,t}^n \ra M$, and that $\ell \ra M \perp \langle n \rangle$. Following lemma says that $\ell_{s,t}^n$ is positive for sufficiently large $a$.

\vskip 10pt
\begin{lem}\label{pos} Let $\ell = [a,b,c]$ be a Minkowski reduced binary $\z$-lattice, that is, $2|b| \le a \le c$. If $a > \ds\frac{4}{3} n ( s^2 + |st| + t^2 )$, then $\ell_{s,t}^n$ is positive.
\end{lem}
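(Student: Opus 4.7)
The plan is to verify the two standard conditions for positive-definiteness of a binary form $[A,B,C]$, namely $A > 0$ and $AC - B^2 > 0$, applied to $A = a - ns^2$, $B = b - nst$, $C = c - nt^2$. The first inequality is immediate from the hypothesis, since $s^2 \le s^2 + |st| + t^2$ gives $a > \tfrac{4}{3}ns^2 > ns^2$, so the main work is the determinant bound.

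A direct expansion yields
$$
(a - ns^2)(c - nt^2) - (b - nst)^2 \;=\; (ac - b^2) \;-\; n\bigl(at^2 + cs^2 - 2bst\bigr).
$$
I would then feed in the two Minkowski reduction inequalities $2|b| \le a$ and $a \le c$. The first gives $b^2 \le a^2/4$ and, crucially, $-2bst \le 2|b|\,|st| \le a\,|st|$. Substituting these yields
$$
(a - ns^2)(c - nt^2) - (b - nst)^2 \;\ge\; c(a - ns^2) \;-\; \tfrac{a^2}{4} \;-\; na\bigl(t^2 + |st|\bigr).
$$

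The key step is that the coefficient of $c$ on the right-hand side is $a - ns^2$, which we have already shown to be positive, so the inequality $c \ge a$ may be used to replace $c$ by $a$ without reversing the bound. After doing so, the right-hand side collapses to $a\bigl(\tfrac{3a}{4} - n(s^2 + |st| + t^2)\bigr)$, which is strictly positive precisely under the hypothesized lower bound on $a$. The only place where care is needed is to apply $2|b|\le a$ to \emph{both} $b^2$ and $-2bst$, and to check that the coefficient of $c$ remains positive before invoking $c \ge a$; after that, the computation is mechanical and I do not expect any substantial obstacle.
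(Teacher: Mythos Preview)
Your proposal is correct and follows essentially the same route as the paper's proof: expand the discriminant of $\ell_{s,t}^n$ and feed in the Minkowski inequalities $2|b|\le a$ and $a\le c$ to reduce to the hypothesized bound on $a$. The only cosmetic difference is that the paper bounds $t^2 a\le t^2 c$ and $2|b|\,|st|\le c|st|$ so as to factor out $c$ (obtaining $\tfrac{3}{4}c\bigl(a-\tfrac{4}{3}n(s^2+|st|+t^2)\bigr)>0$), whereas you isolate the coefficient of $c$, check it is positive, and then use $c\ge a$ to factor out $a$ instead; your explicit verification that $a-ns^2>0$ is a small improvement in completeness, since the paper's argument only establishes $d\ell_{s,t}^n>0$ and leaves the sign of the leading entry implicit.
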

\begin{proof} $d \ell_{s,t}^n = ac - b^2 - ns^2 c + 2nst b - nt^2 a =
\ds \frac{1}{4} ac - b^2 + \frac{3}{4} ac - n (s^2 c - 2st b + t^2 a)$\\
$\ge 0 + \ds\frac{3}{4}ac - n(s^2 c + |st| c + t^2 c)
= \frac{3}{4}c \bigg( a - \frac{4}{3} n (s^2 + |st| + t^2) \bigg) > 0 $.
\end{proof}

\vskip 1cm
\section{Main results}

\begin{thm} There are exactly 14 almost 2-universal quinary diagonal $\z$-lattices. Those lattices are:
\begin{table}[ht]
\begin{tabular}{|l|}
\hline \rule[-2mm]{-2.5mm}{7mm} 
 \quad $\langle 1,1,1,1,1 \rangle, \ \langle 1,1,1,1,2 \rangle, \ \langle 1,1,1,1,3 \rangle, \ \langle 1,1,1,2,2 \rangle, \ \langle 1,1,1,2,3 \rangle$ \\  \hline

\rule[-2mm]{-2.5mm}{7mm} 
 \quad $\langle 1,1,1,1,5 \rangle, \ \langle 1,1,1,2,4 \rangle, \ \langle 1,1,1,2,5 \rangle, \ \langle 1,1,1,2,7 \rangle, \ \langle 1,1,2,2,3 \rangle, \ \langle 1,1,2,2,5 \rangle $ \\  \hline

\rule[-2mm]{-2.5mm}{7mm} 
 \quad $\langle 1,1,1,3,7 \rangle, \ \langle 1,1,2,3,5 \rangle, \ \langle 1,1,2,3,8 \rangle$ \\  \hline

\end{tabular}
\vskip 1mm \caption{Almost 2-universal diagonal quinary $\z$-lattices}
\end{table}

\end{thm}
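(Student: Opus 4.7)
The plan is to leverage the classification work of \cite{oh}, which shows that any almost 2-universal quinary diagonal $\z$-lattice must appear in the displayed table. The eleven lattices in the first two rows are the 2-universal quinary diagonals classified in \cite{kko}, and as such are trivially almost 2-universal. The remaining work is therefore to verify almost 2-universality for the three candidates in the bottom row, namely $L_1 = \langle 1,1,1,3,7 \rangle$, $L_2 = \langle 1,1,2,3,5 \rangle$, and $L_3 = \langle 1,1,2,3,8 \rangle$.

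For each $L_i$, I would apply the technique developed in Section 2. Split $L_i = M \perp \langle n \rangle$ with $M$ a quaternary class-number-one $\z$-lattice; the natural choices are $L_1 = \langle 1,1,1,3\rangle \perp \langle 7\rangle$, $L_2 = \langle 1,1,2,3\rangle \perp \langle 5\rangle$, and $L_3 = \langle 1,1,2,3\rangle \perp \langle 8\rangle$. Fix a Minkowski-reduced binary $\ell=[a,b,c]$. By the class-number-one local-global principle and the discussion in Section 2, the existence of $s,t$ with $\ell_{s,t}^n \ra M$ reduces to finding $(s,t)$ which (i) satisfies prescribed congruences at each prime $p \mid 2dM$ dictated by the $\z_p$-structure of $\ell$, and (ii) makes $\gcd(a-ns^2, b-nst)$ free of prime factors in $\mathfrak{P} = \{p : (dM/p) = -1\}$. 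Both can be arranged simultaneously by the Chinese Remainder Theorem, producing $(s,t)$ whose size is bounded in terms of $n$ and $dM$ only. Lemma~\ref{pos} then certifies positivity of $\ell_{s,t}^n$ as soon as $a$ exceeds an explicit constant.

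The principal obstacle is the hard case flagged just after Lemma~\ref{pos}: when $n$ has a prime factor $q \in \mathfrak{P}$ and $\mathfrak{s}\ell \subseteq q\z$, every $\ell_{s,t}^n$ again has scale in $q\z$, so the coprimality step (ii) cannot be executed. This phenomenon is unavoidable for $L_1$ because $(3/7)=-1$, and must also be monitored for $L_2$ and $L_3$ with respect to the relevant anisotropic primes of their $M$. I would deal with it by one of two devices: either change the splitting of $L_i$ so that the offending prime $q$ is absorbed into $M$ rather than the one-dimensional factor, or reduce to the scaled case $\ell = q\,\ell'$ and recover a representation of $\ell$ by constructing it from one of $\ell'$ through the orthogonal decomposition of $L_i$. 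Handling this hard case cleanly across all three $L_i$ is where I expect the main technical work to lie.

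Finally, for each $L_i$ I would enumerate the finitely many Minkowski-reduced binaries $[a,b,c]$ with $a$ below the threshold produced by Lemma~\ref{pos} together with the bound on $(s,t)$, and by direct computation decide which are represented and which are not. Since the set of exceptions is finite in each case, almost 2-universality of $L_1$, $L_2$, $L_3$ follows, and combined with the eleven 2-universal entries this yields exactly the 14 lattices asserted by the theorem.
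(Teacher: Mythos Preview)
Your overall strategy matches the paper's, but there are two substantive points where the plan as written would not close.

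First, a factual slip: only the five lattices in the top row are $2$-universal. The six in the middle row are strictly almost $2$-universal (each omits at least one binary class); their status comes from \cite{hw} and \cite{oh}, not from \cite{kko}. This does not affect the logical structure, but the attribution is wrong.

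Second, and more seriously, your assertion that the Chinese Remainder Theorem produces $(s,t)$ of size bounded ``in terms of $n$ and $dM$ only'' is incorrect. Once $s$ is fixed, the primes of $\mathfrak{P}$ dividing $a-ns^2$ can be arbitrarily many, and each imposes a separate congruence on $t$; the range needed for $t$ therefore grows with the number $k$ of such primes, not just with $n$ and $dM$. The paper closes this by the compensating inequality $a \ge ns^2 + p_1\cdots p_k$, so that $a$ grows at least primorially in $k$, dominating the $O(k^2)$ growth required by Lemma~\ref{pos}. Without this trade-off argument (carried out case by case in $k$), the positivity step does not go through.

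Third, for the hard case $\mathfrak{s}\ell \subseteq 7\z$ in $L_1$, the paper does scale down to $\ell = \ell'^{\,7}$ as in your second device, but it then introduces an auxiliary \emph{non-diagonal} class-number-one lattice $K = \langle 1\rangle \perp \left(\begin{smallmatrix}2&1&0\\1&2&1\\0&1&3\end{smallmatrix}\right)$ satisfying $(K\perp\langle 21\rangle)^7 \ra L_1$, and reruns the entire method to get $\ell' \ra K\perp\langle 21\rangle$. Neither of your proposed devices quite supplies this: there is no way to absorb $7$ into a quaternary class-number-one diagonal summand of $L_1$, and the bare orthogonal decomposition of $L_1$ does not by itself manufacture the required scaled sublattice. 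Incidentally, for $L_2$ and $L_3$ the hard case is vacuous, since $(6/5)=1$ and $2\mid dM$, so no Step~3 is needed there.
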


\vskip -10pt

By \cite{oh}, there are eleven almost 2-universal quinary diagonal $\z$-lattices and there can be at most three more. Among the quinary diagonal $\z$-lattices listed in Table 1, five lattices in the first box are, in fact 2-universal, and six lattices in the second box are almost 2-universal (see \cite{kko}, \cite{hw} and \cite{oh}). Three lattices in the third box are candidates for almost 2-universal quinary diagonal $\z$-lattices provided in \cite{oh}.  Now we prove that those three lattices are indeed almost 2-universal.

\vskip 10pt
\begin{thm}\label{137} The quinary $\z$-lattice $L = \langle 1,1,1,3,7 \rangle$ represents all binary lattices except following 19 binary lattices:

$$ [2,1,3], \, [4,1,4], \, \langle 1,6 \rangle, \, \langle 4,6 \rangle, \, [2,1,7], \, [3,1,7], \, [4,2,7], $$
$$ [6,3,7], \, [7,2,7], \, [7,3,9], \,  [7,1,10], \, [10,5,10], \, [7,2,15], $$
$$\langle 10,15 \rangle, \, \langle 6,16 \rangle, \, [7,2,22], \, [7,1,26], \, [7,3,34], \, [10,5,47]. $$
\end{thm}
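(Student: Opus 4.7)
The plan is to apply the technique of Section~2 with three different splittings of $L$. Up to the order of summands, $L = \langle 1,1,1,3,7 \rangle$ decomposes as
\[
L = \langle 1,1,1,3 \rangle \perp \langle 7 \rangle = \langle 1,1,1,7 \rangle \perp \langle 3 \rangle = \langle 1,1,3,7 \rangle \perp \langle 1 \rangle,
\]
and I would first verify that each quaternary $M$ appearing here has class number one, a routine Minkowski-bound computation. With this in hand, for any Minkowski reduced binary $\ell = [a,b,c]$ locally represented by $L$ everywhere, the task of showing $\ell \to L$ reduces to finding integers $s,t$ such that $\ell_{s,t}^n$ is locally represented by $M$ at every prime and is positive definite, for at least one of the three splittings.

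Fix such an $\ell$ and assume $a$ is large. For each splitting, the local obstructions at primes $p \nmid 2 dM$ collapse, via the dichotomy recorded in Section~2, to the requirement that $\gcd(a - ns^2, b - nst)$ have no prime divisor in $\mathfrak{P}_M$. Choosing $(s,t)$ in a bounded box with prescribed residues modulo $2 dM$, this gcd condition can be arranged by a Chinese Remainder plus counting argument. The remaining conditions at primes $p \mid 2dM$ (here $p \in \{2,3,7\}$) impose finitely many congruences on $(s,t)$, which are consistent precisely because $\ell_p \to L_p$ is assumed. Positivity is then secured by Lemma~\ref{pos} once $a$ exceeds an explicit threshold depending only on $M$ and $n$.

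The hard case, flagged in Section~2, arises when $\mathfrak{s}\ell \subseteq q\mathbb{Z}$ for some prime $q$ dividing $n$ and lying in $\mathfrak{P}_M$. For the first splitting, $dM = 3$ and $7 \in \mathfrak{P}_M$ (since $\left(\frac{3}{7}\right) = -1$ by quadratic reciprocity), so this splitting fails whenever $\mathfrak{s}\ell \subseteq 7\mathbb{Z}$. I would reroute through the second splitting, where $n = 3$ and $3 \notin \mathfrak{P}_{\langle 1,1,1,7 \rangle}$ because $\left(\frac{7}{3}\right) = 1$; the symmetric case $\mathfrak{s}\ell \subseteq 3\mathbb{Z}$ is handled by the first splitting; and the simultaneous case $\mathfrak{s}\ell \subseteq 21\mathbb{Z}$ is handled by the third splitting with $n = 1$, which is immune to non-residue obstructions. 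The principal obstacle will be ensuring that in each branch the local congruences at $2, 3, 7$ admit a solution $(s,t)$ within the bounded box required by Lemma~\ref{pos}; I expect this to reduce to a finite case distinction on the residues of $(a,b,c)$ modulo a small integer such as $168 = 8 \cdot 3 \cdot 7$.

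To conclude, I would assemble a finite list consisting of the binary lattices with $a$ below the threshold coming from the generic argument, together with those failing $\ell_p \to L_p$ at some $p \in \{2,3,7\}$. For each $\ell$ on this list, either an explicit $(s,t)$ realizing $\ell \to L$ through one of the three splittings is exhibited, or a local obstruction confirming $\ell$ as a genuine exception is identified by a direct computation on $L_p$. The 19 lattices displayed in the statement should emerge as exactly those surviving all such tests.
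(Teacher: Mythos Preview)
Your overall strategy---split $L$ as $M\perp\langle n\rangle$ with $h(M)=1$, arrange local representability of $\ell_{s,t}^n$ by congruences on $(s,t)$, and finish small cases by hand---is exactly the paper's, and your use of $M=\langle 1,1,1,3\rangle$, $n=7$ for the generic case matches Step~1 and Step~2 there. The divergence, and the gap, is in your treatment of the hard case $\mathfrak{s}\ell\subseteq 7\z$.

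You propose to reroute through $M=\langle 1,1,1,7\rangle$ with $n=3$, relying on the claim that $\langle 1,1,1,7\rangle$ has class number one. It does not: the quaternary lattice $\langle 1,1\rangle\perp[2,1,4]$ lies in the same genus (check $\z_2$ and $\z_7$), yet is not isometric to $\langle 1,1,1,7\rangle$ since it has only four minimal vectors of norm~$1$ rather than six. So this splitting cannot be used with the class-number-one machinery of Section~2, and your rerouting for the $7$-divisible case collapses. (The third splitting $\langle 1,1,3,7\rangle\perp\langle 1\rangle$ would need the same verification, which you have not done.) The paper circumvents this by descaling: writing $\ell=(\ell')^7$ and embedding $(K\perp\langle 21\rangle)^7$ into $L$ for a carefully chosen \emph{non-diagonal} quaternary $K=\langle 1\rangle\perp\begin{pmatrix}2&1&0\\1&2&1\\0&1&3\end{pmatrix}$ of class number one, then running the same machine on $\ell'$ with $n=21$. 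That choice of $K$ is the missing idea.

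One smaller point: your closing plan to certify each of the $19$ exceptions by ``a local obstruction on $L_p$'' cannot work. The lattice $L$ is locally $2$-universal, so every binary $\ell$ satisfies $\ell_p\to L_p$ for all~$p$; the $19$ exceptions are purely global and must be confirmed by a direct finite search showing no integral representation exists.
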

\begin{proof}
Consider the quaternary sublattice $M = \langle 1,1,1,3  \rangle$ of $L$, which has class number one. Let $\ell:=[a,b,c]$ a binary lattice such that $0 \le 2b \le a \le c$. And we define the binary lattice
$$ \ell_{s,t} := [a-7s^2, b-7st, c-7t^2] $$
for each integers $s,t$. Note that $\ell \ra M \perp \langle 7 \rangle$ if and only if $\ell_{s,t} \ra M$ for some integers $s,t$.

\vskip 10pt
\noindent \textbf{(Step 1)}
First, for the case that $a<30$, we verify that $\ell=[a,b,c] \ra M \perp \langle 7 \rangle$. As a sample, we only consider the case that $a=10, \, b=5$. Other cases can be verified similarly. We use the fact that
$$ [3,1,c] \ra M \quad \textit{if} \ \ c \equiv 0,1,4,5,6 \pmod 8, $$
$$ \langle 3,c \rangle \ra M \quad \textit{if} \ \ c \equiv 1,2,3,5,6 \pmod 8. $$
For a binary lattice $\ell = [10,5,c]$, we can check the followings:
\begin{itemize}
\item If $c \not\equiv 3,6,7 \pmod 8$, then $\ell_{1,2} \simeq \langle 3, c-55 \rangle \ra M$. ($c>55$)
\item If $c \equiv 6 \pmod 8$, then $\ell_{1,1} \simeq [3,1,c-8] \ra M$. ($c>8$)
\item If $c \equiv 7,11 \pmod{16}$, then $\ell_{1,1} \subseteq [3,1, \frac{1}{4}(c-7) ] \ra M$. ($c>7$)
\item If $c \equiv 15 \pmod{16}$, then $\ell_{1,2} \subseteq \langle 3, \frac{1}{4} (c-55) \rangle \ra M$. ($c>55$)
\item If $c \equiv 3 \pmod{16}$, then $\ell_{1,3} \subseteq \ [3,1,\frac{1}{4}(c-147) ] \ra M$. ($c>147$)
\end{itemize}
For a small $c$ such that $\ell_{1,t}$ is not positive, we can also check it by a direct calculation. In this case, it can be verified that three binary lattices $[10,5,3] \simeq [2,1,3], [10,5,10], [10,5,47]$ are not represented by $M \perp \langle 7 \rangle$. In short, 
$$ [10,5,c] \ra M \perp \langle 7 \rangle \quad \forall c \neq 3, 10, 47. $$
For $a<30$, we can verify $\ell \ra L$ except followings:
$$ \langle 1,6 \rangle, \, [2,1,3], \, \langle 4,6 \rangle, \, [4,1,4], \, \langle 6,16 \rangle,
\, \langle 10,15 \rangle, \, [10,5,10], \, [10,5,47], $$
$$ [7,1,c_1] \ (c_1 = 2,3,10,26),  \quad [7,2,c_2] \ (c_2 = 4,7,15,22),  \quad  [7,3,c_3] \ (c_3 = 6,9,34). $$
By a direct calculation, we can verify that sublattices of above exceptions with index a power of 2 are represented by $L$. Hereafter, we only consider $\z_2$-primtive binary lattices $\ell$.

\vskip 10pt
\noindent \textbf{(Step 2)} For a binary lattice $\ell = [a,b,c]$, suppose that $a,c \ge 30$ and $0 \le 2b \le a \le c$. And suppose that $\mathfrak{s} \ell \not\subseteq 7 \z$. 

By checking the local structure of $\ell_{s,t}$ and $M$ over $\z_2, \z_3$, we obtain the following properties.
\begin{itemize}
\item[(1.1)] If $(a,b,c) \equiv (1,0,1) \pmod 2$ and $(s,t) \equiv (1,1) \pmod 2$, then $\ell_{s,t} \ra M$ over $\z_2$.
\item[(1.2)] If $(a,b,c) \equiv (0,1,0) \pmod 2$ and $(s,t) \equiv (0,0) \pmod 2$, then $\ell_{s,t} \ra M$ over $\z_2$.
\item[(1.3)] If $(a,b,c) \equiv (1,1,0) \pmod 2$ and $(s,t) \equiv (1,0) \pmod 2$, then $\ell_{s,t} \ra M$ over $\z_2$.
\item[(1.4)] If $(a,b,c) \equiv (0,1,1) \pmod 2$ and $(s,t) \equiv (0,1) \pmod 2$, then $\ell_{s,t} \ra M$ over $\z_2$.
\item[(1.5)] If $a \equiv 5 \pmod 8$ or $c \equiv 5 \pmod 8$, then $\ell_{s,t} \ra M$ over $\z_2$ for any $s,t$.
\item[(1.6)] If $a \equiv 1 \pmod 8, \, 2 \mid s$ or $c \equiv 1 \pmod 8, \, 2 \mid t$, then $\ell_{s,t} \ra M$ over $\z_2$.
\item[(1.7)] If $4 \mid a, \, 2 \nmid s$ or $4 \mid c, \, 2 \nmid t$, then $\ell_{s,t} \ra M$ over $\z_2$.
\item[(1.8)] If $a \equiv 3 \pmod 4$ or $c \equiv 3 \pmod 4$, $2 \mid b$ and $2 \nmid st$, then $\ell_{s,t} \ra M$ over $\z_2$.
\item[(2.1)] If $3 \mid ac$ and $3 \nmid st$, then $\ell_{s,t} \ra M$ over $\z_3$.
\item[(2.2)] If $(a,b,c) \equiv (1,0,1),(1,0,2),(2,0,1) \pmod 3$ and $3 \nmid st$, then $\ell_{s,t} \ra M$ over $\z_3$.
\item[(2.3)] If $(a,b,c) \equiv (1,2,1),(1,2,2),(2,1,2),(2,2,1) \pmod 3$ and $st \equiv 1 \pmod 3$, then $\ell_{s,t} \ra M$ over $\z_3$.
\item[(2.4)] If $(a,b,c) \equiv (1,1,1),(1,1,2),(2,1,1),(2,2,2) \pmod 3$ and $st \equiv 2 \pmod 3$, then $\ell_{s,t} \ra M$ over $\z_3$.
\item[(2.5)] If $(a,b,c) \equiv (2,0,2) \pmod 3$ and $st \equiv 0 \pmod 3$, then $\ell_{s,t} \ra M$ over $\z_3$.
\end{itemize}

Under the assumption that the binary lattice $\ell$ is $\z_2$-primitive, above conditions cover all cases. For example, the case that $a \equiv c \equiv 3 \pmod 4$ and $2 \nmid b$ is not contained in above. However in this case $\ell$ is not $\z_2$-primitive. $\z_3$-primitivity of $\ell$ is not necessary. Regardless of $\z_3$-primitivity of $\ell$, all cases are contained in above.

For all cases, we may choose $s=1,2$ and $t \equiv i \pmod 6$ for some $i$. Each case can be proved similarly. We only consider the case that $\ell=[a,b,c]$ satisfies the conditions given in both (1.4) and (2.3). In this case, $\ell_{s,t} \ra M$ over $\z_2, \z_3$ if $s=2$ and $t \equiv 1 \pmod 6$.

Let $\mathfrak{P} = \{5, 7, 17, 19, 29, 31, ... \}$ be the set of primes $p$ such that $\ds \bigg( \frac{3}{p} \bigg) = -1$. From the assumption that $\mathfrak{s} \ell \not\subseteq 7 \z$, we get $\mathfrak{s} \ell_{s,t} \not\subseteq 7 \z$ for all $s,t$, and hence $\ell_{s,t} \ra M$ over $\z_7$. Let $p_1, p_2, ..., p_k$ be the primes in $\mathfrak{P} - \{ 7 \}$ dividing $a-28$. We want choose a suitable $t$ such that $b-14t$ is relatively prime to $p_1 p_2 ... p_k$. Then we get $\ell_{s,t} \ra M$ over $\z_p$ for all $p \neq 2,3$.

If $k=0$, then $\ell_{2,1} \ra M$. By lemma \ref{pos}, $\ell_{2,1}$ is positive if $a \ge 66$. In the case that $30 \le a \le 65$, one can show that $\ell_{2,1}$ is also positive for sufficiently large $c$. In the case that $a=34$, for example, $\ell_{2,1}$ is positive whenever $c>39$. The remaining cases are finitely many and we can check it by a direct calculation.

If $1 \le k \le 4$, then $\ell_{2,t} \ra M$ for some $t \in \{ 6m+1 \mid \ds - \Big[ \frac{k+1}{2} \Big] \le m \le \Big[ \frac{k}{2} \Big] \}$. Note that $a \ge 28 + p_1 \cdots p_k$. In the case that $k=3,4,5$, all $\ell_{2,t}$ are positive by lemma \ref{pos}. In case that $k=1,2$, however, the positiveness of $\ell_{2,t}$ is not guaranteed. There are only finitely many cases such that $\ell_{2,t}$ is not positive. When $\ell_{2,t}$ is not positive, we can check that $\ell \ra L$ by a direct calculation.

If $k = 5$, then $\ell_{2,t} \ra M$ for some $t \in \{ -17, -11, ... 13, 19 \}$. Since $a \ge 28 + 5 \cdot 17 \cdot 19 \cdot 29 \cdot 31$, $\ell_{2,19}$ is positive.

If $k \ge 6$, then $\ell_{2,t} \ra M$ for some $t \in \{ -3k 2^k +1 , ... -5,1, ..., 3k 2^k - 5 \}$. Since $a \ge 28 + 5 \cdot 17 \cdot 19 \cdot 29 \cdot 31 \cdot 41 \cdot 43^{k-6}$, all $\ell_{2,t}$ are positive by lemma 3 of \cite{kko}.

\vskip 10pt
\noindent \textbf{(Step 3)} We show that $\ell \ra L$ when $\mathfrak{s} \ell \subseteq 7 \ $, 
that is, $\ell$ is the form of $[7a,7b,7c]$. Let $\ell' = [a,b,c]$, then $\ell = \ell'^7$. Consider the quaternary lattice $K = \langle 1 \rangle \perp \begin{pmatrix} 2 & 1 & 0 \\ 1 & 2 & 1 \\ 0 & 1 & 3 \end{pmatrix}$. Note that $K$ has class number one and
$$ \big( K \perp \langle 21 \rangle \big)^7 \ra L.$$
If $\ell' \ra K \perp \langle 21 \rangle$, then $\ell'^7 = \ell \ra L$. We only consider the case that $\ell'^7$ is $\z_7$-primitive. Thus we may assume that $\ell' \simeq \langle 1, - \varDelta \rangle$ over $\z_7$, where $\varDelta$ is a nonsquare unit in $\z_7$. This is equivalent to
 $$d \ell' \equiv 1,2,4 \pmod 7.$$
Define $ \ell'_{s,t} = [a-21s^2, b-21st, c-21t^2]$. From the fact that $7 \nmid d \ell'$ we get $7 \nmid d \ell'_{s,t}$ and $\ell'_{s,t} \ra K$ over $\z_7$ for any $s,t$. The followings are the sufficient conditions such that $\ell'_{s,t} \ra K$ over $\z_2$ assuming that $\ell'$ is $\z_2$-primitive.
\begin{itemize}
\item[(1)] $(a,b,c) \equiv (1,0,1) \pmod 2$ and $(s,t) \equiv (1,1) \pmod 2$.
\item[(2)] $(a,b,c) \equiv (0,0,1),(0,1,0),(1,0,0),(1,1,1) \pmod 2$ and $(s,t) \equiv (0,0) \pmod 2$.
\item[(3)] $(a,b,c) \equiv (1,1,0) \pmod 2$ and $(s,t) \equiv (1,0) \pmod 2$.
\item[(4)] $(a,b,c) \equiv (0,1,1) \pmod 2$ and $(s,t) \equiv (0,1) \pmod 2$.
\end{itemize}
Using the same method as step 1,2, we get $\ell' \ra K \perp \langle 21 \rangle$ with finitely many exceptions. For example, $\langle 5,61 \rangle, \, [9,2,13] \nra K \perp \langle 21 \rangle$. By brute force computation, one can show that the binary lattices obtained by scaling these excepstions are indeed represented by $L$. Therefore we conclude that $\ell \ra L$ for all binary lattices whose scale is contained in $7 \z$.
\end{proof}

\vskip 10pt
For the other two lattices, the proofs are quite similar to the above. We only provide following data for the proof of almost 2-universality of $L$:
\begin{itemize}
\item[(1)] Quternary sublattice $M$ which has class number one
\item[(2)] The integer $n$ satisfying $M \perp \langle n \rangle \ra L$
\item[(3)] Conditions such that $\ell_{s,t}^n \ra M$ over $\z_p$ where $p \mid 2 dM$
\item[(4)] Data for the case that $\mathfrak{s} \ell \subseteq q \z$ where $q \mid n$ and $\ds \bigg( \frac{dM}{q} \bigg) = -1$
\end{itemize}

\vskip 10pt
\begin{thm}\label{238} {\rm (1)} The quinary $\z$-lattice $\langle 1,1,2,3,5 \rangle$ represents all binary lattices except $$ [2,1,2], \, [5,2,5], \, [6,3,6].$$
{\rm (2)} The quinary $\z$-lattice $\langle 1,1,2,3,8 \rangle$ represents all binary lattices except the following 15 binary lattices:
$$ [2,1,2], \langle 2,6 \rangle, \, \langle 5,6 \rangle, \, [5,2,5], \, [5,1,8], \, [6,3,6], \, [6,3,8], \,  [6,3,14], $$
$$ \langle 6,26 \rangle, \, \langle 7,7 \rangle, \, [8,4,8], \, \langle 10,33 \rangle, \, \langle 11,14 \rangle, \, \langle 14,14 \rangle, \, [25,3,25]. $$
\end{thm}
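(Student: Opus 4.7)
Following the data-driven template sketched by the author just before the statement of Theorem \ref{238}, the plan is to run the three-step proof of Theorem \ref{137} in parallel for both parts. In each case I would fix the quaternary sublattice $M = \langle 1,1,2,3 \rangle$, which is known to have class number one, and write $L = M \perp \langle n \rangle$ with $n = 5$ for part (1) and $n = 8$ for part (2); binary representability $\ell \ra L$ then reduces to finding integers $s,t$ with $\ell_{s,t}^n \ra M$, a purely local question once positivity of $\ell_{s,t}^n$ is controlled by Lemma \ref{pos}.

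In both parts the ``hard case'' Step 3 of Theorem \ref{137} is vacuous, which is the key structural simplification. In part (1), $(dM/n) = (6/5) = 1$, so $5 \notin \mathfrak{P} = \{p : (6/p) = -1\}$; in part (2), the only prime factor of $n = 8$ is $2$, and $2$ divides $dM = 6$, so again no prime factor of $n$ lies in $\mathfrak{P}$. Consequently no auxiliary quaternary lattice $K$ (analogous to the one used in Step 3 of Theorem \ref{137} to treat $\mathfrak{s}\ell \subseteq 7\z$) is needed. The proof then reduces to: (Step 1) a direct finite verification for $\ell = [a,b,c]$ with small leading coefficient $a$, yielding the three (resp.\ fifteen) exceptions listed; and (Step 2) for each residue type of $(a,b,c)$ modulo $2$ and $3$, produce an explicit $s \in \{1,2\}$ and a value of $t$ in a short arithmetic progression mod $6$ such that $\ell_{s,t}^n \ra M$ locally at $p = 2, 3$, then refine $t$ within that progression to kill the prime factors of $a - n s^2$ that lie in $\mathfrak{P}$.

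The main obstacle I expect is the $2$-adic bookkeeping in part (2). Because $n = 8$, the perturbation $\ell_{s,t}^8$ agrees with $\ell$ only modulo $8$, so the local table at $p = 2$ has to be stated modulo $8$ rather than modulo $4$, making the case split strictly finer than the analogue (1.1)--(1.8) of Theorem \ref{137}. The many exceptions of part (2), and especially those with $\mathfrak{s}\ell \subseteq 2\z$ such as $[8,4,8] = [2,1,2]^2$ and $\langle 14, 14 \rangle$, sit precisely at the boundary where this enumeration fails to produce a suitable $(s,t)$; checking that the $2$-adic table is exhaustive for every other $\z_2$-primitive $\ell$ is exactly the claim that the exception list is complete. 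I would handle this by a careful mod-$8$ case analysis, a direct brute-force computation for $a$ below a moderate threshold (around $a \le 25$ seems to suffice to harvest all fifteen exceptions), and the positivity bound of Lemma \ref{pos} to close off the infinitely many remaining cases.
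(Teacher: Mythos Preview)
Your proposal is correct and follows essentially the same approach as the paper: $M = \langle 1,1,2,3\rangle$ with $n \in \{5,8\}$, Step~3 vacuous for exactly the reason you give, and the substance lying in the local tables at $p=2,3$ plus a finite check for small $a$. One small refinement you will encounter on execution: for $n=8$ the $2$-adic table must in fact be stated modulo $16$ (not merely modulo $8$), since deciding $\ell_{s,t}^8 \ra M$ over $\z_2$ in several residue classes requires knowing $a,c$ modulo $16$ and $b$ modulo $8$.
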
 
\begin{proof}
Set $M = \langle 1,1,2,3 \rangle, \, n=5,8$

\vskip 10pt \noindent
Conditions such that $\ell_{s,t}^n \ra M$ over $\z_3$ where $n=5,8$: ($\z_3$-primitivity of $\ell$ is not necessary)
\begin{itemize}
\item $3 \mid ac$ and $3 \nmid st$
\item $(a,b,c) \equiv (1,0,2),(2,0,1),(2,0,2) \pmod 3$ and $3 \nmid st$
\item $(a,b,c) \equiv (1,1,2),(1,2,1),(2,1,1),(2,1,2) \pmod 3$ and $st \equiv 1 \pmod 3$
\item $(a,b,c) \equiv (1,1,1),(1,2,2),(2,2,1),(2,2,2) \pmod 3$ and $st \equiv 2 \pmod 3$
\item $(a,b,c) \equiv (1,0,1) \pmod 3$ and $st \equiv 0 \pmod 3$
\end{itemize}

\vskip 10pt \noindent
Conditions such that $\ell_{s,t}^5 \ra M$ over $\z_2$: ($\ell$ is $\z_2$-primitive)
\begin{itemize}
\item $(a,b,c) \equiv (0,1,0) \pmod 2$ and $\forall s,t$
\item $(a,b,c) \equiv (0,0,1),(1,0,0),(1,0,1) \pmod 2$ and $(s,t) \equiv (1,1) \pmod 2$
\item $(a,b,c) \equiv (0,1,1),(1,1,1) \pmod 2$ and $(s,t) \equiv (0,1) \pmod 2$
\item $(a,b,c) \equiv (1,1,0),(1,1,1) \pmod 2$ and $(s,t) \equiv (1,0) \pmod 2$
\end{itemize}

\vskip 10pt \noindent
Conditions such that $\ell_{s,t}^8 \ra M$ over $\z_2$: ($\ell$ is $\z_2$-primitive)
\begin{itemize}
\item $(a,b,c) \equiv (0,1,0),(0,1,1),(1,1,0),(1,0,1) \pmod 2$ and $\forall s,t$
\item $a \equiv 5,7 \pmod 8$ or $c \equiv 5,7 \pmod 8$ and $\forall s,t$
\item $a \equiv 10,14 \pmod{16}, \, 2 \nmid s$ or $c \equiv 10,14 \pmod{16}, \, 2 \nmid t$
\item $(a,c) \equiv (1,3), (9,11) \pmod{16}, \, b \equiv \pm 1 \pmod 8$ and $(s,t) \equiv (1,1) \pmod 2$
\item $(a,c) \equiv (1,11), (3,9) \pmod{16}, \, b \equiv \pm 3 \pmod 8$ and $(s,t) \equiv (1,1) \pmod 2$
\item $(a,c) \equiv (1,3), (9,11) \pmod{16}, \, b \equiv \pm 3 \pmod 8$ and $(s,t) \equiv (0,1),(1,0) \pmod 2$
\item $(a,c) \equiv (1,11), (3,9) \pmod{16}, \, b \equiv \pm 1 \pmod 8$ and $(s,t) \equiv (0,1),(1,0) \pmod 2$
\item $(a,c) \equiv (1,6),(9,6),(3,2),(11,2) \pmod{16}, \, b \equiv 2 \pmod 4$ and $(s,t) \equiv (1,0) \pmod 2$
\item $(a,c) \equiv (1,2),(9,2),(3,6),(11,6) \pmod{16}, \, b \equiv 0 \pmod 4$ and $(s,t) \equiv (1,0) \pmod 2$
\item $(a,c) \equiv (1,6),(9,6),(3,2),(11,2) \pmod{16}, \, b \equiv 0 \pmod 4$ and $(s,t) \equiv (1,1) \pmod 2$
\item $(a,c) \equiv (1,2),(9,2),(3,6),(11,6) \pmod{16}, \, b \equiv 2 \pmod 4$ and $(s,t) \equiv (1,1) \pmod 2$
\end{itemize}
Since there are no prime factors $q$ such that $q \mid n$ and $\ds \bigg( \frac{dM}{q} \bigg) = -1$, in this case, the process such as step 3 in the proof of Theorem \ref{137} is not necessary.
\end{proof}

\end{document}